\documentclass[11pt,leqno]{amsart}
\usepackage[letterpaper, total={6in, 8in}, left=1.25in, top=1.5in]{geometry}
\usepackage{amsmath}
\usepackage{amsfonts}
\usepackage{amssymb}
\usepackage{graphicx}
\usepackage{color}
\usepackage{hyperref}
\usepackage{xcolor}

\newtheorem{theorem}{Theorem}[section]
\newtheorem*{theorem*}{Theorem}
\newtheorem{lemma}{Lemma}[section]
\newtheorem{corollary}[theorem]{Corollary}
\newtheorem{proposition}{Proposition}[section]

\def\a{\alpha}
\def\l{\lambda}

\def\p{\partial}

\def\R{\mathbb{R}}

\def\div{\operatorname{div}}

\def \p {\partial}

\numberwithin{equation}{section}

\begin{document}

\title[Robin eigenvaule of the Weighted Laplacian]{An isoperimetric inequality  for the second Robin eigenvalue of the Weighted Laplacian}

\author{Yi Gao} 
\address{School of Mathematical Sciences, Soochow University, Suzhou, 215006, China}
\email{yigao\underline{~}1@163.com}

\author{Kui Wang} 
\address{School of Mathematical Sciences, Soochow University, Suzhou, 215006, China}
\email{kuiwang@suda.edu.cn}

\author{Anqiang Zhu} 
\address{School of Mathematics and Statistics, Wuhan University, Wuhan 430072, China}
\email{aqzhu.math@whu.edu.cn}

\subjclass[2010]{35P15, 58G25}

\keywords{Second Robin eigenvalue, Weighted measure, Isoperimetric inequality, Steklov eigenvalue}

\begin{abstract}
In this paper, we investigate a shape optimization problem for the  second Robin eigenvalue of the weighted Laplacian on bounded Lipschitz domains symmetric about the origin. Our main theorem states that the ball centered at the origin maximizes the second Robin eigenvalue among all Lipschitz bounded domains of prescribed weighted measure and symmetric about the origin for a range of negative Robin parameters.

\end{abstract}
\maketitle

\section{Introduction}
The classical Szeg\"o-Weinberger inequality \cite{Sz54, Wei56} states that the ball uniquely maximizes the second (first nonzero) Neumann eigenvalue of Laplacian  among all bounded domains of the same volume in Euclidean space.   Using the arguments of  Szeg\"o and Weinberger, analogous inequalities for the second Neumann eigenvalues have been established in various settings:  Ashbaugh-Benguria \cite{AB95} extended the  the inequality to bounded domains in hemisphere and in hyperbolic space; Chiacchio and di Blasio \cite{CB12} obtained a sharp isopermetric inequality for the first nonzero Neumann eigenvalue for bounded domians symmetric about the origin in Gauss space. 
Moreover, it is expected (cf. \cite[Problem 4.41]{Hen17}) that for a range of negative Robin parameters, the second Robin eigenvalue should be maximized by  a round ball. This expectation has recently been confirmed by Freitas and Laugesen via the Szeg\"o \cite{FL20} and the Weinberger  \cite{FL21} approaches. 
For further results on Szeg\"o-Weinberger type inequalities, we refer to \cite{AS96,  As99, BBC20, BCB16,  LL23, LWW22, LWW23, MW24, Wang19, XW23} and references therein.

Eigenvalue optimization problems for the weighted Laplacian have also attracted  considerable attention. For instance,  Chiacchio and di Blasio \cite{CB12} proved that, among all origin-symmetric regions with  fixed Gaussian volume, the ball maximizes the first nonzero Neumann eigenvalue; Brock, Francesco and di Blasio \cite{BCB16} obtained  optimal  Szeg\"o-Weinberger type inequalities under a weighted measure.
Additional results on weighted eigenvalue problems can be found in \cite{BCHT13, BCKT16, BCB16, CM24, CG22, GW26} and the references therein.

Inspired by the work of Brock, Francesco and di Blasio \cite{BCB16} on the weighted Laplace operator and that of Freitas and Laugesen  \cite{FL21} on the second Robin eigenvalue, we study the shape optimization problem for the second Robin eigenvalue of the following class of problems
\begin{align}\label{eq 1.1}
    \begin{cases}
        -\div (e^{h(|x|)}\nabla u)=\l e^{h(|x|)} u, \quad & x\in \Omega,\\
        \frac{\p u}{\p \nu}+\a u=0, \quad & x\in \p \Omega,
    \end{cases}
\end{align}
where $\a\in \R$. Here and in the sequel, $\Omega$ denotes a bounded connected  domain in $\R^m$ with Lipchitz boundary, $\nu$ is the outward normal to $\p \Omega$, and $h(r)$ is a $C^2$ function on $[0, +\infty)$.  We define the weighted measure associated with $h$ by 
$$
d\gamma_h=e^{h(|x|)}\,dx, \qquad  x\in \R^m,
$$
and denote by $W^{1,2}(\Omega; \gamma_h)$   the weighted Sobolev space equipped with the norm
\begin{align*}
||u||_{W^{1,2}(\Omega;\gamma_h)}:=\left(\int_\Omega u^2 \, d\gamma_h\right)^{1/2}+ \left(\int_\Omega |\nabla u|^2 \, d\gamma_h\right)^{1/2}. 
\end{align*}

It is well known that the problem \eqref{eq 1.1} has discrete  spectrum. We  denote its eigenvalue by $\l_{k, \a}(\Omega; \gamma_h)$ for $k=1,2, \cdots$, which satisfy
  \begin{align*}
       \l_{1, \a}(\Omega;\gamma_h) < \l_{2, \a}(\Omega;\gamma_h)  \leq \l_{3, \a}(\Omega;\gamma_h) \leq \cdots \to +\infty,
    \end{align*}
with each eigenvalue repeated according to its multiplicity. 
By standard spectral theory for self-adjoint compact operators,  the first two eigenvalues of \eqref{eq 1.1} admit the variational characterizations: 
\begin{align}
     \lambda_{1,\alpha}(\Omega; \gamma_h)=\inf\{\frac{\int_{\Omega}|\nabla u|^{2}d\gamma_h+\alpha\int_{\partial \Omega}u^{2} e^{h}\, dA}{\int_{\Omega}u^{2}d\gamma_h}: u\in W^{1,2}(\Omega;\gamma_h)\setminus\{0\}\},
\end{align}
where $dA$ is  the
induced measure on $\p \Omega$;
and
\begin{align}\label{eq 1.3}
    \lambda_{2,\alpha}(\Omega;\gamma_h)=\inf\{\frac{\int_{\Omega}|\nabla u|^{2}d\gamma_h+\alpha\int_{\partial \Omega}u^{2} e^{h}dA}{\int_{\Omega}u^{2}d\gamma_h}: u\in W^{1,2}(\Omega;\gamma_h)\setminus\{0\}, \int_{\Omega}uu_{1}d\gamma_h=0\},
\end{align}
where $u_1(x)$ is an eigenfunction corresponding to $\l_{1, \a}(\Omega;\gamma_h)$. 

To state our main results, we introduce the  following Steklov  eigenvalue problem on $\Omega$ 
\begin{align}\label{eq 1.4}
\begin{cases}
      -\div (e^{h(|x|)}\nabla u)=0 \quad & x\in \Omega,\\
        \frac{\p u}{\p \nu}=\sigma u, \quad & x\in \p \Omega.
\end{cases}    
\end{align}
The first nonzero Steklov eigenvalue can be characterized  variationally by
\begin{align}
\sigma_1(\Omega; \gamma_h)=\inf\left\{ \frac{\int_{\Omega} |\nabla u|^2 \, d\gamma_h}{\int_{\partial \Omega}  u^2 e^{h}\, dA} : u\in W^{1,2}(\Omega; \gamma_h )\setminus\{0\},\;\; \int_{\partial \Omega} ue^{h(|x|)}\ dA=0 \right\}.
\end{align}
 In this paper,  we establish the following optimal isoperimetric inequality for the second Robin eigenvalue of \eqref{eq 1.1}.
\begin{theorem}\label{thm 1}
Let $\Omega\subset \R^m$ be a bounded  Lipschitz domain symmetric about the origin. Suppose $h(r)\in C^2([0, +\infty))$ satisfies  $h'(r)\ge 0$ and $h''(r)\ge 0$ for $r\in (0, +\infty)$. Let  $B \subset \R^m$  be the  origin-centered  round ball with the same $\gamma_h$-volume as $\Omega$, namely $ \int_{\Omega} e^{h(|x|)}\, dx= \int_{B} e^{h(|x|)}\, dx$. Denote by $\sigma_1(B; \gamma_h)$ the first nonzero Steklov eigenvalue of \eqref{eq 1.4} on $B$, and $\l_{2,\a}(\Omega; \gamma_h)$ the second Robin eigenvalue of \eqref{eq 1.1} on $\Omega$.  If  $\a \in [-\sigma_1(B; \gamma_h), 0]$, then 
\begin{align}\label{eq 1.6}
\l_{2,\a}(\Omega; \gamma_h) \le \l_{2,\a}(B; \gamma_h).
\end{align}
Equality holds  if and only if 
$\Omega=B$. 
\end{theorem}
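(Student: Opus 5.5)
We follow the Weinberger approach of Freitas--Laugesen \cite{FL21}, adapted to the weight $e^{h(|x|)}$ as in \cite{BCB16}. On the ball $B=B_R$ the second Robin eigenvalue, for $\a\in[-\sigma_1(B;\gamma_h),0]$, has multiplicity $m$ with eigenfunctions $g(r)\,x_j/r$, $j=1,\dots,m$. Here $g$ solves the radial ODE
\[
g''+\Big(\frac{m-1}{r}+h'(r)\Big)g'-\frac{m-1}{r^2}g=-\l_{2,\a}(B)g,\qquad g(0)=0,\qquad g'(R)+\a g(R)=0.
\]
We first check this identification: the radial branch versus the $\ell=1$ angular branch, with $\a\ge -\sigma_1(B;\gamma_h)$ guaranteeing $\l_{2,\a}(B)\le$ the relevant comparison value. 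We also show that $g$ may be taken with $g>0$ on $(0,R]$ and $g'\ge0$ on $[0,R]$. The range condition on $\a$ enters precisely here: since $\a\le 0$, the boundary condition $g'(R)=-\a g(R)\ge0$, and the Steklov bound keeps $\l_{2,\a}(B)\ge0$, so we are in the regime where $g$ is increasing. Extend $g$ by the constant $G(r)=g(R)$ for $r\ge R$.

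Next we use the trial functions $v_j=G(|x|)x_j/|x|$ on $\Omega$. Because $\Omega$ is symmetric about the origin, the first Robin eigenfunction $u_1$ is even; by simplicity of $\l_{1,\a}$, $u_1(-x)$ is also an eigenfunction, and $u_1>0$. Since each $v_j$ is odd, $\int_\Omega v_ju_1\,d\gamma_h=0$ automatically, so no Brouwer-type centering argument is needed. Plugging $v_j$ into \eqref{eq 1.3} and summing over $j$ gives
\[
\l_{2,\a}(\Omega)\int_\Omega G^2\,d\gamma_h\le \int_\Omega\Big(G'^2+\frac{m-1}{r^2}G^2\Big)d\gamma_h+\a\int_{\p\Omega}G^2e^{h}\,dA .
\]

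The main obstacle is the boundary term, since $\a\le0$ makes it negative and it increases under boundary growth. Following \cite{FL21}, we convert it to a volume integral with the divergence theorem applied to the field $G(r)^2\,x/r\,e^{h}$. Using $x/r\cdot\nu\le1$ and $\a\le0$,
\[
\a\int_{\p\Omega}G^2e^h\,dA\le \a\int_{\p\Omega}G^2e^h\frac{x}{r}\cdot\nu\,dA=\a\int_\Omega\Big(2GG'+\frac{m-1}{r}G^2+h'G^2\Big)d\gamma_h .
\]
This yields $\l_{2,\a}(\Omega)\le \int_\Omega N\,d\gamma_h/\int_\Omega G^2\,d\gamma_h$ with
\[
N(r)=G'^2+\frac{m-1}{r^2}G^2+\a\Big(2GG'+\frac{m-1}{r}G^2+h'G^2\Big).
\]
On $B$ every step is an equality, so the quotient equals $\l_{2,\a}(B)$. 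It then suffices to show that $G^2$ is nondecreasing and $N$ is nonincreasing in $r$. The weighted mass transplantation principle then gives $\int_\Omega N\,d\gamma_h\le\int_B N\,d\gamma_h$ and $\int_\Omega G^2\,d\gamma_h\ge\int_B G^2\,d\gamma_h$, since $\gamma_h(\Omega)=\gamma_h(B)$ and $\Omega\setminus B$ lies outside $B$ while $B\setminus\Omega$ lies inside.

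Monotonicity of $G^2$ follows from $g'\ge0$. For $r>R$ we have $N=(m-1)G^2/r^2+\a G^2\big((m-1)/r+h'\big)$. The first term decreases, and the second is nonincreasing because $\a\le0$ and $h'\ge0$, $h''\ge0$; this is where the hypothesis on $h$ is used. For $r<R$ we differentiate $N$, substitute the ODE for $g''$, and bound the result using $\a\in[-\sigma_1,0]$, $g'\ge0$, $h',h''\ge0$, and the comparison $g'\le g/r$. This last comparison follows from a Riccati-type argument on $q=rg'/g$, and is expected to be the most technical computation.

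For equality, if $\Omega\ne B$ then $\gamma_h(\Omega\setminus B)>0$. Strict monotonicity of $N$ or of $G^2$ across $r=R$, or strictness in $x/r\cdot\nu\le1$ on $\p\Omega\setminus\p B$ when $\a<0$, then gives a strict inequality. The case $\a=0$ reduces to the Neumann result of \cite{BCB16}.
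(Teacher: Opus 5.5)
\textbf{Verdict: there is a genuine gap.} Your overall architecture matches the paper's: odd trial functions $G(|x|)x_j/|x|$, orthogonality to $u_1$ from symmetry, converting the boundary term via $\frac{x}{r}\cdot\nu\le 1$ and $\a\le 0$, and mass transplantation for $N$ and $G^2$. The argument breaks at the choice of extension.

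\textbf{The constant extension makes $N$ jump up at $r=R$.} With $G(r)=g(R)$ for $r\ge R$ and $\a<0$, $N$ is not nonincreasing. Since $g'(R)=-\a g(R)$,
\[
N(R^-)=\Big(-\a^2+\frac{m-1}{R^2}+\a\frac{m-1}{R}+\a h'(R)\Big)g(R)^2,\qquad N(R^+)=N(R^-)+\a^2 g(R)^2 .
\]
Take a symmetric Lipschitz perturbation of $B$ that moves a thin layer of $\gamma_h$-mass from just inside $\p B$ to just outside. To leading order,
\[
\int_\Omega N\,d\gamma_h-\int_B N\,d\gamma_h\approx \a^2 g(R)^2\,\gamma_h(\Omega\setminus B)>0,
\]
so the transplantation step fails and the chain of inequalities yields nothing.

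Your exterior claim is also false. The term $\a\frac{m-1}{r}G^2$ is \emph{increasing} in $r$ when $\a<0$. For constant $G$ one has $N'(r)=G^2\big(-\frac{m-1}{r^2}(\frac 2r+\a)+\a h''\big)$, which is positive for $r>2/|\a|$ when $m\ge 2$ and, e.g., $h''\equiv 0$.

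\textbf{The paper's fix.} Use the $C^1$ extension $g(r)=g(R)e^{-\a(r-R)}$ for $r\ge R$. It is still nondecreasing because $\a\le 0$. It satisfies $g'+\a g=0$ outside $B$, so $g'^2+2\a gg'=(g'+\a g)^2-\a^2g^2=-\a^2 g^2$ is the pointwise minimum of that expression over the choice of $g'$. Then $N$ is continuous at $R$, and for $r>R$
\[
N'(r)\le -\frac{m-1}{r}\Big(2\big(\a+\tfrac{3}{4r}\big)^2+\tfrac{7}{8r^2}\Big)g(r)^2<0 .
\]

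\textbf{Interior monotonicity needs a different inequality.} On $(0,R)$ you have not identified the key ingredient. Substituting the ODE gives
\[
N'=-\frac{2(m-1)}{r}\Big(g'-\frac gr\Big)^2-2\l_{2,\a}(B)\,g\,(g'+\a g)-2h'g'^2+2\a g'^2+\a\frac{m-1}{r^2}g^2+\a h''g^2 .
\]
The last four terms are nonpositive by $h',h''\ge 0$ and $\a\le 0$. The term $(g'-g/r)^2$ enters squared with a negative sign, so your comparison $g'\le g/r$ is not needed. It also does nothing for the nonnegative piece $-2\l_{2,\a}(B)\a g^2$.

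What is needed is $\l_{2,\a}(B)\ge 0$ together with the \emph{lower} bound $g'+\a g\ge 0$ on $(0,R]$. The paper proves the latter by a maximum-principle argument on the Riccati equation for $v=g'/g$, using $h''\ge 0$; it is valid for $\a\ge -2/R$. This is the second place the range condition enters: with $h'\ge 0$, the trial functions $x_i$ give $\sigma_1(B;\gamma_h)\le 1/R$, hence $\a\ge -1/R$.

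Two smaller points. Monotonicity of $g$ on $(0,R)$ needs only $\a\le 0$, not $\l_{2,\a}(B)\ge 0$. Your identification of the second eigenfunctions on the ball is only asserted. The paper proves $\mu_1<\tau_2$ by a Sturm comparison between $g$ and $e^{h}f'$ on the first nodal interval of the radial eigenfunction $f$; this uses only $\a\le 0$ and $h'\ge 0$.
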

By taking $\a=0$, Theorem \ref{thm 1} implies that the ball centered at the
origin is the unique set maximizer the second Neumann eigenvalue $\mu_2(\Omega;\gamma_h)$ among all bounded Lipschitz domains $\Omega$ in $\R^m$ with prescribed $\gamma_h$-measure and symmetric about the origin. This result was previously  proved by Brock, Chiacchio, and di Blasio (Theorem 1.2 of \cite{BCB16}). Note that Theorem \ref{thm 1} applies in particular  to $\l_{2, \a}(\Omega; e^{|x|^2/2})$. Hence, our result provides information about  eigenvalues of the problem
$$
-\Delta u-x\cdot \nabla u=\l u
$$
which has been widely studied in literature (see, e.g. \cite{BCM12, BMP13}). 
The symmetry assumption on $\Omega$ in Theorem \ref{thm 1}
 ensures the validity of  the orthogonality conditions \eqref{eq 3.5}, as the weighted measure $e^{h(|x|)}$ is  radially
 symmetric about the origin.
It remains an interesting question that whether \eqref{eq 1.6} 
holds in the case of the Gauss measure $e^{-|x|^2/2}dx$.   

By taking $\a=-\sigma_1(B;\gamma_h)$, Theorem \ref{thm 1} yields the following Brock–Weinstock inequality, which was proved by Brock and Chiacchio \cite[Theorems 1.4 and 1.5]{BC25} and by Mao and Zhang \cite[Corollary 1.2]{MZ24}. 
\begin{corollary}\label{cor}
Under the hypotheses of Theorem \ref{thm 1}, we have 
\begin{align*}
\sigma_1(\Omega; \gamma_h)\le \sigma_1(B; \gamma_h).
\end{align*}
Equality holds if and only if $\Omega$ is isometric to the round ball $B$.
\end{corollary}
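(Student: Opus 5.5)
The plan is to specialize Theorem \ref{thm 1} to the endpoint parameter $\a=-\sigma_1(B;\gamma_h)$, and to relate the second Robin eigenvalue at this parameter to Steklov eigenvalues through a suitable trial function. There are three steps: (i) show $\l_{2,\a}(B;\gamma_h)=0$ for $\a=-\sigma_1(B;\gamma_h)$; (ii) use Theorem \ref{thm 1} to get $\l_{2,\a}(\Omega;\gamma_h)\le 0$ and convert this into $\sigma_1(\Omega;\gamma_h)\le\sigma_1(B;\gamma_h)$; (iii) handle equality.

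\emph{Step (i).} A function $u\neq 0$ solves \eqref{eq 1.1} with eigenvalue $0$ exactly when $-\div(e^{h}\nabla u)=0$ in $\Omega$ and $\p_\nu u=-\a u$ on $\p\Omega$, i.e. exactly when $-\a$ is a Steklov eigenvalue of \eqref{eq 1.4}.

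Consider $\a\mapsto \l_{k,\a}(B;\gamma_h)$. These are min–max values of Rayleigh quotients that depend affinely on $\a$, so they are continuous and nondecreasing in $\a$. At $\a=0$ we have $\l_{1,0}=0<\l_{2,0}$. For $\a<0$, testing with constants gives $\l_{1,\a}<0$.

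Now decrease $\a$ from $0$. The branch $\l_{2,\a}(B)$ can vanish only when $-\a>0$ is a Steklov eigenvalue, and the smallest positive one is $\sigma_1(B;\gamma_h)$. Hence $\l_{2,\a}(B)>0$ on $(-\sigma_1(B),0]$, so by continuity $\l_{2,-\sigma_1(B)}(B)\ge 0$.

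Conversely, the Steklov eigenfunction of $B$ for $\sigma_1(B)$ is a Robin eigenfunction with eigenvalue $0$ at $\a=-\sigma_1(B)$. The first Robin eigenfunction is simple, hence radial, while the Steklov eigenfunction has the form $g(r)x_i/r$, so the two are $\gamma_h$-orthogonal. Using the Steklov eigenfunction in \eqref{eq 1.3} gives $\l_{2,-\sigma_1(B)}(B)\le 0$. Therefore $\l_{2,-\sigma_1(B)}(B;\gamma_h)=0$.

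I expect this step to be the only delicate point. The crossing argument needs the continuity and monotonicity in $\a$ to be justified, and it needs the first eigenvalue branch to stay strictly negative, so that the zero of $\l_{2,\a}$ is not absorbed by $\l_{1,\a}$.

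\emph{Step (ii).} With $\a=-\sigma_1(B;\gamma_h)$, which lies in the admissible range, Theorem \ref{thm 1} gives $\l_{2,\a}(\Omega;\gamma_h)\le 0$. Let $u_1,u_2$ be the first two Robin eigenfunctions on $\Omega$. Choose a nontrivial combination $u=c_1u_1+c_2u_2$ with $\int_{\p\Omega}u e^{h}\,dA=0$; this is one linear condition on two coefficients, so such a choice exists. Then
$$\int_\Omega|\nabla u|^2d\gamma_h-\sigma_1(B)\int_{\p\Omega}u^2e^hdA\le \l_{2,\a}(\Omega)\int_\Omega u^2d\gamma_h\le 0.$$
Since $u$ is admissible in the Steklov variational characterization,
$$\sigma_1(\Omega)\int_{\p\Omega}u^2e^h\,dA\le \sigma_1(B)\int_{\p\Omega}u^2e^h\,dA+\l_{2,\a}(\Omega)\int_\Omega u^2\,d\gamma_h.$$
We need $\int_{\p\Omega}u^2e^h\,dA>0$. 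If $u$ vanished on $\p\Omega$, the displayed inequality would force $\int_\Omega|\nabla u|^2\,d\gamma_h\le 0$, so $u$ would be constant and hence zero, a contradiction. Dividing, we get $\sigma_1(\Omega)\le\sigma_1(B)$.

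\emph{Step (iii).} If $\sigma_1(\Omega)=\sigma_1(B)$, the last inequality gives $\l_{2,\a}(\Omega)\int_\Omega u^2\,d\gamma_h\ge 0$. Since $u\neq 0$ and $\l_{2,\a}(\Omega)\le 0$, this forces $\l_{2,\a}(\Omega)=0=\l_{2,\a}(B)$. The equality case of Theorem \ref{thm 1} then yields $\Omega=B$.
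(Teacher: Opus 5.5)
Your proof is correct, but Step (ii) takes a different route from the paper's.

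\emph{The paper's route.} It argues on $\Omega$ by continuity in the Robin parameter. Since $\l_{2,0}(\Omega;\gamma_h)>0$ and $\l_{2,-\sigma_1(B)}(\Omega;\gamma_h)\le \l_{2,-\sigma_1(B)}(B;\gamma_h)=0$, the intermediate value theorem gives some $\a_0\in[-\sigma_1(B;\gamma_h),0)$ with $\l_{2,\a_0}(\Omega;\gamma_h)=0$. The corresponding eigenfunction $u$ is weighted-harmonic with $\p_\nu u=-\a_0u$. Integrating the equation gives $\a_0\int_{\p\Omega}ue^{h}\,dA=0$, so $u$ has zero weighted boundary mean because $\a_0\neq 0$. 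Inserting $u$ into the Steklov quotient then gives $\sigma_1(\Omega;\gamma_h)\le-\a_0\le\sigma_1(B;\gamma_h)$.

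\emph{Your route.} You stay at the endpoint $\a=-\sigma_1(B;\gamma_h)$ and build the zero-boundary-mean test function from the span of $u_1,u_2$. This needs no continuity of $\a\mapsto\l_{2,\a}(\Omega;\gamma_h)$ and no information on $\l_{2,0}(\Omega;\gamma_h)$. It also makes the equality case immediate. The paper's proof actually stops after the inequality and never treats equality. In its framework one would note that equality forces $\a_0=-\sigma_1(B;\gamma_h)$ and then invoke the equality case of Theorem \ref{thm 1}, exactly as you do.

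\emph{On Step (i).} You flagged this as the delicate point, but your argument only uses the easy half, $\l_{2,-\sigma_1(B)}(B;\gamma_h)\le 0$. In Step (iii) the chain $0\le\l_{2,\a}(\Omega;\gamma_h)\le\l_{2,\a}(B;\gamma_h)\le0$ already forces $\l_{2,\a}(B;\gamma_h)=0$. So the crossing and continuity argument on $B$ can be dropped entirely. The paper simply asserts the value $0$, and the lower bound is also Proposition \ref{prop 2.4}.

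For the easy half, you also do not need the unproved explicit form $g(r)x_i/r$ of the first Steklov eigenfunction on $B$. That eigenfunction is a Robin eigenfunction with eigenvalue $0>\l_{1,\a}(B;\gamma_h)$, so it is automatically $\gamma_h$-orthogonal to the first Robin eigenfunction.
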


The main technique in the proof of Theorem \ref{thm 1} relies on Weinberger’s trick, in which  test functions are constructed using eigenfunctions on a round ball to estimate eigenvalues. The main difficulties in the proof include analyzing the properties of the second eigenfunction of \eqref{eq 1.1} on the round ball  and establishing the required monotonicity to apply Weinberger’s method. The rest of the paper is organized as follows.
In Section \ref{sect2}, we study properties of the Robin and Steklov eigenvalue  problems for balls in the weighted space. In Section  \ref{sect3}, we prove Theorem \ref{thm 1} and Corollary \ref{cor}.

\section{Eigenvalue Problem for Balls In the Weighted Space}\label{sect2}
In this section, we  prove several properties of the eigenvalue problems \eqref{eq 1.1} and  \eqref{eq 1.4}  for round balls centered at the origin. Denote by $B(R)\subset\R^m$ the origin-centered  ball of radius $R$. Then Steklov eigenvalue problem of \eqref{eq 1.4} on $B(R)$ takes the form 
\begin{align}\label{eq 2.1}
\begin{cases}
      -\Delta u-\nabla h\cdot \nabla u=0 \quad & \text{in $B(R)$,}\\
\frac{\p u}{\p \nu}=\sigma u \quad & \text{on $\p B(R)$,}
\end{cases}
\end{align}
and the Robin eigenvalue problem  of \eqref{eq 1.1} on $B(R)$ becomes  
\begin{align}\label{eq 2.2}
\begin{cases}
    -\Delta u-\nabla h \cdot \nabla u=\l u \quad & \text{in $B(R)$,}\\
\frac{\p u}{\p \nu}+\a u=0 \quad & \text{on $\p B(R)$,}
\end{cases}
\end{align}
where $\nu$ is the unit outer normal  on $\p B(R)$.

It is well known that the first nonzero Steklov eigenvalue of  the Laplacian on $B(R)$ is give by $1/R$. For the weighted Steklov eigenvalue problem \eqref{eq 1.4}, we obtain the following upper bound for $\sigma_1(B(R); \gamma_h)$.
\begin{proposition}
      Let $\sigma_1(B(R); \gamma_h)$ be the first nonzero Steklov eigenvalue of  \eqref{eq 2.1}. If $h'(r)\ge 0$, then 
 \begin{align}\label{eq 2.3}
\sigma_1(B(R); \gamma_h)\le \frac 1 R.
 \end{align}   
\end{proposition}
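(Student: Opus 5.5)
We prove \eqref{eq 2.3} by inserting a coordinate function into the variational characterization of $\sigma_1(B(R);\gamma_h)$. Take $u(x)=x_i$ for a fixed $i\in\{1,\dots,m\}$. Since $e^{h(|x|)}$ is radial and $\partial B(R)$ is symmetric under $x_i\mapsto -x_i$, we have $\int_{\partial B(R)} x_i e^{h(R)}\,dA=0$, so $u$ is an admissible test function. This gives
\begin{align*}
\sigma_1(B(R);\gamma_h)\le \frac{\int_{B(R)} e^{h(|x|)}\,dx}{e^{h(R)}\int_{\partial B(R)} x_i^2\,dA}.
\end{align*}
Summing over $i$ (equivalently, averaging the Rayleigh quotients), the denominator becomes $e^{h(R)}R^2|\partial B(R)|/m$ and the numerator $\int_{B(R)}e^{h}\,dx$. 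It is cleaner, though, to sum numerators and denominators separately:
\begin{align*}
\sigma_1(B(R);\gamma_h)\le \frac{m\int_{B(R)} e^{h(|x|)}\,dx}{R^2 e^{h(R)}|\partial B(R)|}.
\end{align*}

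It remains to show $m\int_{B(R)}e^{h(|x|)}\,dx\le R\,e^{h(R)}|\partial B(R)|$. Writing $|\partial B(R)|=m\omega_m R^{m-1}$ with $\omega_m$ the volume of the unit ball, and using polar coordinates, the left side is
\begin{align*}
m\cdot m\omega_m\int_0^R e^{h(r)}r^{m-1}\,dr .
\end{align*}
Since $h'\ge 0$, we have $e^{h(r)}\le e^{h(R)}$ for $r\le R$. Hence the left side is at most $m\omega_m e^{h(R)}R^m$, which equals $R\,e^{h(R)}|\partial B(R)|$. Substituting yields $\sigma_1(B(R);\gamma_h)\le 1/R$.

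There is no real obstacle. The only points to check are the admissibility of $x_i$, which holds because the weight on the sphere is constant, and that $x_i\in W^{1,2}(B(R);\gamma_h)$, which is clear since $h$ is continuous on the compact closed ball. The monotonicity $h'\ge0$ is used exactly once, in the comparison of the radial integral.
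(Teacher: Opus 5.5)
Your proposal is correct and is essentially the paper's proof: you use the coordinate functions $x_i$ as admissible Steklov test functions (admissible since the weight is constant on $\partial B(R)$), sum the Rayleigh-quotient inequalities over $i$, and use $h'\ge 0$ once to bound $e^{h(r)}\le e^{h(R)}$ in the radial integral. Your explicit normalization $m\int_{B(R)}e^{h}\,dx\le R\,e^{h(R)}|\partial B(R)|$ is just the paper's ratio $\frac{m\int_0^R e^{h(r)}r^{m-1}\,dr}{e^{h(R)}R^{m+1}}\le \frac{1}{R}$ written in a different form.
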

\begin{proof}
Recall that the first nonzero eigenvalue of \eqref{eq 2.1} is characterized by the  Rayleigh quotient
\begin{align*}
\sigma_1(B(R);\gamma_h)=\inf\left\{ \frac{\int_{B(R)} |\nabla u|^2 \, d\gamma_h}{\int_{\partial B(R)}  u^2 e^{h(|x|)}\, dA} : u\in W^{1,2}(B(R))\setminus\{0\},\;\; \int_{\partial B(R)} ue^{h(|x|)}\ dA=0 \right\}.
\end{align*}
For each $1\le i\le m$, we have 
\begin{align*}
\sigma_1(B(R); \gamma_h) \int_{\partial B(R) }  x_i^2 e^{h(|x|)}\, dA   \le  \int_{B(R)} |\nabla x_i|^2 \, d\gamma_h
\end{align*}
 Summing  over $i$ yields
\begin{align*}
  \sigma_1(B(R); \gamma_h)  \le  \frac{\int_{B(R)} \sum_{i=1}^m|\nabla x_i|^2\, d\gamma_h}{\int_{\partial B(R) }  \sum_{i=1}^m x_i^2 e^{h(|x|)}\, dA}=\frac{m\int_0^R e^{h(r)} r^{m-1}\, dr}{e^{h(R)}R^{m+1}}\le\frac{m\int_0^R e^{h(R)} r^{m-1}\, dr}{e^{h(R)}R^{m+1}}=\frac 1 R,
\end{align*}
where we used the $h'\ge 0$ in the second inequality. 
\end{proof}
We now use the  separation of variables to study the Robin eigenvalue problem \eqref{eq 2.2}. Let
$$
u(r, \theta) = w(r) T(\theta), \quad r\in (0, R), \quad \theta\in \mathbb{S}^{m-1},
$$
be a Robin eigenfunction on $B(R)$. Substituting into  \eqref{eq 2.2}  gives
\begin{align*}
-w''(r)-(\frac {m-1} r+h'(r)) w'(r)-\frac{\Delta_{S_r}T(\theta)}{T(\theta)} w(r)=\l w(r), 
\end{align*}
which implies that the spherical part $T(\theta)$ is an eigenfunction of Laplace–Beltrami operator on the distance sphere $S_r$.
Since $\l_{1, \a}(B(R); \gamma_h)$ is simple and $B(R)$ is rotationally symmetric, the first Robin eigenfunction of \eqref{eq 2.2} must be radial. Hence, $\l_{1, \a}(B(R);\gamma_h)$ is the first eigenvalue of radial problem
\begin{equation} \label{eq 2.4}
\begin{cases}
w'' + \left( \dfrac{m-1}{r} + h'(r) \right)w' +\l w=0 & \text{in } (0,R), \\
w'(0) =0, \qquad  w'(R)+\alpha w(R) = 0.
\end{cases}
\end{equation}
The second eigenvalue $\l_{2, \a}(B(R);\gamma_h)$ of \eqref{eq 2.2} is  either the second eigenvalue of \eqref{eq 2.4} or the first eigenvalue of the problem
\begin{equation} \label{eq 2.5}
\begin{cases}
w'' + \left( \dfrac{m-1}{r} + h'(r) \right)w' + (\l-\frac {m-1} {r^2}) w = 0 & \text{in } (0,R), \\
w(0)=0, \qquad  w'(R)+\alpha w(R) = 0.
\end{cases}
\end{equation}
Let $\mu_1$ denote the first eigenvalue of \eqref{eq 2.5} and let
$g(r)$ be a corresponding eigenfunction. Then $g$ satisfies
\begin{align}\label{eq 2.7}
   g'' + \left( \dfrac{m-1}{r} + h' \right)g' + (\mu_1-\frac {m-1} {r^2}) g = 0 
\end{align}
for $r\in (0, R)$, with  $g(0)=g'(R)+\a g(R)=0$. Moreover, 
$\mu_1$ admits the variational characterization
\begin{align}\label{eq 2.8}
 \mu_1= \inf_{ g\in W^{1,2}((0,R))} \left\{
    \frac{\int_0^R\left( g'(r)^2+\frac{m-1}{r^2} g^2\right) r^{m-1}e^{h(r)} \, dr +\a  g^2(R)R^{m-1}e^{h(R)}}{\int_0^R g^2 r^{m-1}e^{h(r)}\,  dr}: g(0)=0\right\},
\end{align}
and the associated eigenfunction $g$ can be chosen positive on $(0, R]$. In the following, we assume that the first eigenfunction $g(r)$ of \eqref{eq 2.5}  is positive on $(0, R)$ and satisfies $g'(0)=1$.
\begin{proposition}\label{prop 2.2}
Let $g(r)$ be the  positive  eigenfunction of \eqref{eq 2.5}. If $\a\le 0$, then
\begin{enumerate}
\item $g'(r)>0$ for $r\in (0, R)$.
\item Assume further that $\a\ge -\frac 2 R$, then  $g'(r)\ge  -\a g(r)$ for $r\in (0, R]$.
\end{enumerate}
\end{proposition}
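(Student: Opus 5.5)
The plan is to work with the self-adjoint form of \eqref{eq 2.7}. Setting $p(r)=r^{m-1}e^{h(r)}g'(r)$, equation \eqref{eq 2.7} reads
\begin{align*}
p'(r)=r^{m-1}e^{h(r)}\Big(\frac{m-1}{r^2}-\mu_1\Big)g(r), \qquad r\in(0,R).
\end{align*}
We use the normalization $g>0$ on $(0,R]$ and $g'(0)=1$. The boundary condition together with $\a\le 0$ gives $g'(R)=-\a g(R)\ge 0$.

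For part (1), I argue by contradiction. Suppose $g'$ vanishes somewhere in $(0,R)$, and let $r_0$ be its first zero. Then $p>0$ on $(0,r_0)$ and $p(r_0)=0$, so $p'(r_0)\le 0$. Since $g(r_0)>0$, this forces $\mu_1\ge (m-1)/r_0^2$. Because $(m-1)/r^2$ is strictly decreasing, $\frac{m-1}{r^2}-\mu_1<0$ for all $r\in(r_0,R]$. Hence $p'<0$ on $(r_0,R]$ and $p(R)<p(r_0)=0$, i.e. $g'(R)<0$. This contradicts $g'(R)\ge 0$, so $g'>0$ on $(0,R)$. This step is routine.

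For part (2), set $\beta=-\a\in[0,2/R]$ and pass to the Riccati variable $v=g'/g$ on $(0,R]$. The claim becomes $v\ge \beta$ on $(0,R]$. From \eqref{eq 2.7},
\begin{align*}
v'=-v^2-\Big(\frac{m-1}{r}+h'\Big)v+\frac{m-1}{r^2}-\mu_1 ,
\end{align*}
with $v(r)\sim 1/r\to+\infty$ as $r\to 0^+$ and $v(R)=\beta$. At any point where $v=\beta$ we have $v'=F(r)$, where
\begin{align*}
F(r):=-\beta^2-\Big(\frac{m-1}{r}+h'(r)\Big)\beta+\frac{m-1}{r^2}-\mu_1 .
\end{align*}
Differentiating gives
\begin{align*}
F'(r)=\frac{m-1}{r^3}(\beta r-2)-h''(r)\beta .
\end{align*}
Since $\beta\le 2/R$, we have $\beta r-2<0$ on $(0,R)$. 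If $h''\ge 0$, which is the standing hypothesis of Theorem \ref{thm 1} (and $\beta\ge0$), then $F$ is strictly decreasing on $(0,R)$ when $m\ge 2$.

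Now suppose $v<\beta$ somewhere in $(0,R)$. Since $v\to+\infty$ at $0$ and $v(R)=\beta$, there is a maximal interval $(a,b)\subset(0,R]$ with $a>0$, $v<\beta$ on $(a,b)$ and $v(a)=v(b)=\beta$. Then $F(a)=v'(a)\le 0$ and $F(b)=v'(b)\ge 0$. Since $a<b$, strict monotonicity gives $F(b)<F(a)\le 0$, a contradiction. Hence $v\ge\beta$, i.e. $g'\ge -\a g$ on $(0,R]$.

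The main obstacle is ensuring the monotonicity of $F$, which is exactly where the restriction $\a\ge -2/R$ enters. It also needs some control of the $h'$ term. As written, it uses $h''\ge 0$, which is not restated in the proposition. If $h''\ge0$ is not to be assumed, I would first try to absorb the $h$ term by working with $p$ and comparing $q=g'+\a g$ through $(r^{m-1}e^h q)'$ directly. The plan is to check whether the sign of $q$ can be propagated using only $h'\ge 0$, and otherwise to invoke $h''\ge 0$ from Theorem \ref{thm 1}.
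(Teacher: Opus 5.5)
Your proposal is correct and follows essentially the paper's argument. Part (1) rests on the same sign structure of $(r^{m-1}e^{h}g')'$, with your first-zero argument in place of the paper's Rolle bookkeeping. Part (2) uses the same Riccati variable $v=g'/g$; your strictly decreasing barrier at the level $-\alpha$ replaces the paper's second-derivative test at an interior minimum of $v$, and both come down to the sign of $\frac{m-1}{r^3}(vr-2)-h''v$. The paper's proof also uses $h''\ge 0$ (in \eqref{eq 2.101}), tacitly inherited from Theorem \ref{thm 1}, so relying on it is no gap relative to the paper.
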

\begin{proof}
(1) Set $$N(r)=r^{m-1}e^{h(r)}g'(r), \quad r\in[0, R].$$ 
Using \eqref{eq 2.5}, we obtain
\begin{align*}
N'(r)=\left(\frac{ m-1}{r^2}-\mu_1\right)r^{m-1}e^{h(r)} g(r).
\end{align*}
Hence $N'(r)$ has at most one zero in $(0, R]$ and is positive near $0$. Since $N(0)=0$, we have $N(r)>0$ for sufficiently small $r$. If there exists $r\in(0, R)$ such that
$N(r)<0$, then there are points $\xi_{1},\xi_{2}\in (0,R)$ with $N(\xi_{i})=0,i=1,2$. By Rolle's theorem, there exist $\eta_{1}\in (0,\xi_{1})$ and $\eta_{2}\in (\xi_{1},\xi_{2})$ such that $N'(\eta_{i})=0,i=1,2$, a contradiction. Therefore, $N(r)\geq 0$ for all $r\in (0, R)$.  If $N(r_{0})=0$ for some $r_0\in (0,R)$, then $N'(r_{0})=0$, implying  $\frac{m-1}{r_{0}^{2}}=\mu_1$. Consequently,  $N^{'}(r)<0$ for $r>r_{0}$, a contradiction. Thus $N(r)>0$ for all $r\in (0,R)$, and hence $g'(r)>0$ for $r\in (0, R)$.

(2) To prove (2), set $v(r)=g'(r)/g(r)$. From \eqref{eq 2.5} we obtain
\begin{align}\label{eq 2.9}
v'+v^2+(\frac{m-1}{r}+h'(r))v+\left(\mu_1-\frac{m-1}{r^2}\right)=0,
\end{align}
with
\begin{align}\label{eq 2.10}
  v(R)=-\a,\quad v(r)>0 \quad\text{for}\quad  r\in(0,R], \quad \text{and} \quad \lim\limits_{r\rightarrow 0^+}v(r)=+\infty.  
\end{align}
We now prove that $v(r)\geq-\a$  on $(0, R]$. Assume by contradiction that there exists $r_0\in(0,R)$ such that
\begin{align*}
v'(r_0)=0,\text{\quad \quad} v''(r_0)\ge 0,\text{\quad and\quad} 0<v(r_0)<-\a.
\end{align*}
Differentiating \eqref{eq 2.9}  and using \eqref{eq 2.10}, we obtain at $r=r_0$ that
\begin{align}\label{eq 2.101}
\begin{split}
0&=v''(r_0)+(h''(r_0)-\frac{m-1}{r_0^2})v(r_0)+\frac{2(m-1)}{r_0^3}\\
&\ge (h''(r_0)-\frac{m-1}{r_0^2})v(r_0)+\frac{2(m-1)}{r_0^3}\\
&\ge -\frac{m-1}{r_0^2} v(r_0)+\frac{2(m-1)}{r_0^3},
\end{split}
\end{align}
where we used $h''\ge 0$ in the last inequality. Thus it follows from \eqref{eq 2.101} that
\begin{align*}
v(r_0)\ge \frac 2 {r_0}>\frac 2 R.
\end{align*}
Hence
$$
-\a>v(r_0)>\frac 2 R,
$$
contradicting the assumption $\a\ge -\frac 2 R$. Therefore, (2) is proved.
\end{proof}
We now establish the following properties of the second Robin eigenfunctions on round balls when $\a\le 0$.
\begin{proposition}\label{prop 2.3}
Suppose $\a\le 0$ and $h'(r)>0$. Then the second Robin eigenfunctions of \eqref{eq 2.2} are given by 
\begin{align*}
u_i(x)=g(r) \frac{x_i}{r},\quad  i=1,2, \cdots,m,
\end{align*}
where $g(r):[0, R]\rightarrow [0,\infty)$ solves  
\begin{equation}\label{eq 2.11}
g''(r)+(\frac {m-1} {r}+h'(r)) g'(r)+\left(\l_{2,\a}(B(R); \gamma_h)-\frac {m-1} {r^2}\right) g(r)=0
\end{equation}
with boundary condition $g(0)=0$ and $g'(R)=-\a g(R)$. Here $\l_{2, \a}(B(R);\a)$ denotes the second Robin eigenvalue of \eqref{eq 2.2}. 
\end{proposition}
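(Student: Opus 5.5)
The plan is to reduce the statement to a one-dimensional comparison. By the separation of variables already carried out, $\l_{2,\a}(B(R);\gamma_h)=\min\{\nu_2,\mu_1\}$, where $\nu_2$ is the second eigenvalue of the radial problem \eqref{eq 2.4} and $\mu_1$ is the first eigenvalue of \eqref{eq 2.5}. If $\mu_1<\nu_2$, then the second eigenspace consists exactly of the functions $g(r)T(\theta)$ with $T$ a first spherical harmonic, i.e.\ it is spanned by $g(r)x_i/r$. Here $g$ is the positive first eigenfunction of \eqref{eq 2.5}, so it satisfies \eqref{eq 2.11} with $\l_{2,\a}=\mu_1$. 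Higher spherical harmonics (eigenvalue $k(k+m-2)$ with $k\ge2$) give strictly larger potentials $k(k+m-2)/r^2$ and hence strictly larger first eigenvalues than $\mu_1$, by \eqref{eq 2.8}. So the whole proof comes down to showing $\mu_1<\nu_2$.

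To prove $\mu_1<\nu_2$, let $w$ be a second eigenfunction of \eqref{eq 2.4}. I would differentiate the ODE and set $\varphi=w'$. Then $\varphi(0)=0$ and
\[
\varphi''+\Big(\frac{m-1}{r}+h'\Big)\varphi'+\Big(\nu_2+h''-\frac{m-1}{r^2}\Big)\varphi=0 .
\]
Next, insert $\varphi$ (or its restriction to $[0,\rho]$, with $\rho$ the first positive zero of $w'$ if there is one, extended by zero) as a test function in \eqref{eq 2.8}. Integrating by parts against the weight $r^{m-1}e^{h}$ gives
\[
\mu_1\int\varphi^2 r^{m-1}e^h\le \nu_2\int\varphi^2 r^{m-1}e^h+\int h''\varphi^2 r^{m-1}e^h+\text{(boundary term at $R$)} .
\]
On the full interval, the boundary term equals $R^{m-1}e^{h(R)}\varphi(R)\big(\varphi'(R)+\a\varphi(R)\big)$. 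Using the ODE at $r=R$ and $w'(R)=-\a w(R)$, this becomes
\[
-\a\, w(R)^2\Big[\Big(\frac{m-1}{R}+h'(R)\Big)\a-\nu_2-\a^2\Big]R^{m-1}e^{h(R)} .
\]
For $\a\le0$ (and $\nu_2>0$, which holds for the second radial mode) this is $\le0$. Moreover, when $\a<0$ the bracket is strictly negative, which gives strict inequality.

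The main obstacle is the extra potential $h''\varphi^2$. It has the wrong sign, so the naive comparison does not close. My first attempt is to rewrite it in terms of $h'$, which is where the hypothesis $h'>0$ should enter. Since $N=r^{m-1}e^hw'$ satisfies $N'=-\nu_2 r^{m-1}e^hw$, I would use the weighted function $\psi=e^{h/2}w'$, or equivalently work with $N$ directly and the operator it satisfies. Under this change of unknown the $h''$ contribution is replaced by terms like $-\tfrac12h''-\tfrac14h'^2-\tfrac{m-1}{2r}h'$. These can then be absorbed, so that the comparison potential is $\le\nu_2$, and strictly so where $h'>0$.

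If that bookkeeping fails, the fallback is a Sturm-type argument. One compares the zero of $w$ inside $(0,R)$ with the positive function $g$ via the Wronskian $r^{m-1}e^h(gw'-g'w)$. This shows that $\nu_2\le\mu_1$ would force $g$ to vanish in $(0,R)$, contradicting Proposition \ref{prop 2.2}(1) and the positivity of $g$.
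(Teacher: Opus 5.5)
\textbf{There is a genuine gap.} Your reduction to $\mu_1<\nu_2$ is fine, but neither of your two routes proves that inequality. The variational route also cannot be repaired by a change of unknown. Put $\varphi=w'$, $p=r^{m-1}e^{h}$ and $\psi=e^{\gamma}\varphi$ for any multiplier $\gamma(r)$. The same integration by parts you did gives exactly
\begin{align*}
\int_0^R\Big(\psi'^2+\frac{m-1}{r^2}\psi^2\Big)p\,dr+\alpha\psi(R)^2p(R)
&=\int_0^R\big(\nu_2+h''+\gamma'^2\big)\psi^2p\,dr\\
&\quad+e^{2\gamma(R)}p(R)\varphi(R)\big(\varphi'(R)+\alpha\varphi(R)\big).
\end{align*}
So the excess of the Rayleigh quotient over $\nu_2$ is a weighted mean of $h''+\gamma'^2\ge h''$. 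Taking $\gamma=h/2$ adds $\tfrac14h'^2$. If you instead mean the Liouville normal form $u=e^{h/2}\,\cdot$, which is unitary, the two potentials still differ by exactly $h''$. Using $e^{h}w'$ or $N$ as the test function only adds $\gamma'^2\ge0$. The term $-\tfrac12h''-\tfrac14h'^2-\tfrac{m-1}{2r}h'$ you expect is (minus) the common Liouville potential, and it cancels in the comparison. Proposition \ref{prop 2.3} imposes no sign on $h''$, and the theorem uses $h''\ge0$. Moreover, at $\alpha=0$ your boundary term vanishes. So this route gives only $\mu_1\le\nu_2+\langle h''\rangle$.

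Your fallback compares the wrong pair of functions. The Wronskian $W=p\,(gw'-g'w)$ satisfies $W'=p\,gw\big(\mu_1-\nu_2-\frac{m-1}{r^2}\big)$. This has no sign under $\nu_2\le\mu_1$, because the centrifugal term appears in $g$'s equation but not in $w$'s. With $a$ the zero of $w$, the data $W(0)=0$, $W(a)<0$, $W(R)=0$ are consistent with either ordering, so no zero of $g$ is forced.

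The missing idea, which is the paper's, combines your two instincts. It is a Sturm comparison, not a Rayleigh quotient, between $g$ and $\phi=e^{h}w'=N/r^{m-1}$ on $(0,a)$, where $w>0$ and $w'<0$. In the equation for $\phi$ the $h''$ terms cancel, and $\phi$ solves
\[
\phi''+\Big(\frac{m-1}{r}-h'\Big)\phi'+\Big(\nu_2-\frac{m-1}{r^2}-\frac{m-1}{r}h'\Big)\phi=0,
\]
which is self-adjoint for the dual weight $r^{m-1}e^{-h}$. Rewriting \eqref{eq 2.7} in this form leaves the extra terms $2h'g'+\frac{m-1}{r}h'g$. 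These are $\ge0$ because $h'\ge0$, $g>0$ and $g'>0$ (Proposition \ref{prop 2.2}(1)). Hence $g$ is a supersolution of the $\mu_1$-version of the $\phi$-operator. This is a one-sided pointwise inequality multiplied by $\phi<0$, so the weight-mismatch terms can be dropped by sign; a test-function argument never permits this. Now integrate the identity for $r^{m-1}e^{-h}(\phi g'-g\phi')$ over $(0,a)$. The boundary value at $a$ is negative, since $\phi(a)<0<\phi'(a)=-\frac{m-1}{a}e^{h(a)}w'(a)$. This gives $(\mu_1-\nu_2)\int_0^a g\phi\, r^{m-1}e^{-h}\,dr>0$, hence $\mu_1<\nu_2$.
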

\begin{proof}
  Since the second eigenvalue $\l_{2, \a}(B(R);\gamma_h)$ of \eqref{eq 2.2} is  either the first eigenvalue $\mu_1$ of \eqref{eq 2.5}
 or the second eigenvalue $\tau_2$ of \eqref{eq 2.4},  it suffices to show
\begin{align*}
    \mu_1<\tau_2.
\end{align*}
Recall that an eigenfuction corresponding to the second eigenvalue of \eqref{eq 2.4} must change sign in $(0,R)$. Without loss of generality, assume $f(r)$ is positive in $(0, a)$ for some $a<R$ and $f(a)=0$. Then  $f'(a)<0$, and the same argument as in part (1) of Proposition \ref{prop 2.2} shows $f'(r)<0$ for $r\in(0, a)$.
Let  $f(r)$ be an eigenfunction of \eqref{eq 2.4}, i.e.
\begin{align}\label{eq 2.6}
 f'' + \left( \dfrac{m-1}{r} + h' \right)f' + \tau_2 f = 0  
\end{align}
for $r\in (0, R)$, with $f'(0)=0$ and $f'(R)+\a f(R)=0$.
Differentiating  \eqref{eq 2.6} gives
\begin{align}\label{eq 2.13}
    f'''+(\frac{m-1}{r}+h')f''+(\tau_{2}-\frac{m-1}{r^{2}}+h'')f'=0.
\end{align}
Set $\phi(r)=e^{h(r)}f'(r)$,  then 
\begin{align*}
    \phi'(r)=e^{h(r)}f''+h'\phi, 
\end{align*}
and
\begin{align*}
    \phi''(r)&=e^{h(r)}f'''+e^{h(r)}h'f''+h''\phi+h'\phi'\\
    &=e^{h}(-(\frac{m-1}{r}+h'(r))f''-(\tau_2-\frac{m-1}{r^{2}}+h'')f')+e^{h(r)}h'f''+h''\phi+h'\phi'\\
    &=e^{h}f''(-(\frac{m-1}{r}+h'(r))+h')-(\tau_{2}-\frac{m-1}{r^{2}}+h'')\phi+h''\phi+h'\phi'\\
    &=(\phi'(r)-h'\phi(r))(-\frac{m-1}{r})-(\tau_{2}-\frac{m-1}{r^{2}})\phi+h'\phi'\\
&=(-\frac{m-1}{r}+h')\phi'-(\tau_{2}-\frac{m-1}{r^{2}}-\frac{m-1}{r}h')\phi.
\end{align*}
Thus, \eqref{eq 2.13} implies
\begin{align}\label{eq 2.14}
    \phi''+(\frac{m-1}{r}-h')\phi'+(\tau_{2}-\frac{m-1}{r^{2}}-\frac{m-1}{r}h')\phi=0.
\end{align}
From  \eqref{eq 2.7}, we have
\begin{align*}
    0&=g'' + \left( \dfrac{m-1}{r} + h' \right) g'+ (\mu_{1}-\frac{m-1}{r^{2}}) g\\
    &=g''+(\frac{m-1}{r}-h')g'+(\mu_{1}-\frac{m-1}{r^{2}}-\frac{m-1}{r}h') g+2h'g'+\frac{m-1}{r}h'g.
\end{align*}
and the facts $g(r)>0$,  $h'(r)\ge 0$ and $g'(r)>0$ for $r\in (0,a)$, we obtain
\begin{align}\label{eq 2.15}
    0\geq  g''+(\frac{m-1}{r}-h^{'})g'+(\mu_{1}-\frac{m-1}{r^{2}}-\frac{m-1}{r}h') g.
\end{align}
Because $f'(r)<0$ for $r\in (0,a)$, we have $\phi(r)<0$ for $r\in(0, a)$. Multiplying \eqref{eq 2.15} by $\phi$  yields
\begin{align}\label{eq 2.16}
    0\leq \phi g''+\phi g'(\frac{m-1}{r}-h')+(\mu_{1}-\frac{m-1}{r^{2}}-\frac{m-1}{r}h') \phi g.
\end{align}
Subtracting equation \eqref{eq 2.14} multiplied by  $g$
from \eqref{eq 2.16} gives
\begin{align}\label{eq 2.17}
    0\leq \phi g''-g\phi''+(\frac{m-1}{r}-h')(g'\phi-g\phi')+(\mu_{1}-\tau_{2})g\phi.
\end{align}
Multiplying \eqref{eq 2.17} by $r^{m-1}e^{-h}$ and integrating over $(0,a)$, we obtain
\begin{align}\label{eq 2.18}
    0\leq \int_{0}^{a}(\phi g''-g\phi''+(\frac{m-1}{r}-h')(g'\phi-g\phi')+(\mu_{1}-\tau_{2})g\phi)r^{m-1}e^{-h(r)}dr.
\end{align}
Direct computation yields
\begin{align*}
   & \int_{0}^{a}(\phi g''-g\phi'')r^{m-1}e^{-h(r)}dr\\
    =&\int_{0}^{a}(\phi g'r^{m-1}e^{-h(r)})'-\phi'g'r^{m-1}e^{-h}-(\frac{m-1}{r}-h')\phi g'r^{m-1}e^{-h}dr\\
    &-\int_{0}^{a}(g\phi'r^{m-1}e^{-h(r)})'dr+\int_{0}^{a}g'\phi'r^{m-1}e^{-h}dr+\int_{0}^{a}(\frac{m-1}{r}-h')g\phi^{'}dr,
\end{align*}
and therefore
\begin{align}\label{eq 2.19}
\begin{split}
       &\int_{0}^{a}(\phi g''-\phi''g)r^{m-1}e^{-h(r)}dr\\
    =&(\phi g'-g\phi')r^{m-1}e^{-h}\Big|_{0}^{a}+\int_{0}^{a}(\frac{m-1}{r}-h')(g\phi'-\phi g')r^{m-1}e^{h}dr. 
\end{split}
\end{align}
Substituting \eqref{eq 2.19} into  \eqref{eq 2.18} yields 
\begin{align}\label{eq 2.20}
    0\leq (\phi g'-g\phi')r^{m-1}e^{-h}\Big|_{0}^{a}+\int_{0}^{a}(\mu_{1}-\tau_{2})g\phi r^{m-1}e^{-h(r)}dr.
\end{align}
Since $g(0)=0$ and $\phi(0)=e^{h(0)}f'(0)=0$, we have 
\begin{align*}
    (\phi g'-g\phi')r^{m-1}e^{-h}\Big|_{0}^{a}=\phi(a)g'(a)-g(a)\phi'(a).
\end{align*}
Recall that  $g(a)>0$, $g' (a)>0$ and $\phi(a)=e^{h(a)}f'(a)<0$. From \eqref{eq 2.6}, we obtain  
\begin{align*}
    f''(a)+(\frac{m-1}{a}+h'(a))f'(a)+\tau_{2}f(a)=0
\end{align*}
Since $f(a)=0$ and $f'(a)\leq 0$, it follows that
\begin{align*}
    f''(a)+h'(a)f'(a)=-\frac{m-1}{a}f'(a)> 0,
\end{align*}
and consequently
\begin{align*}
    \phi'(a)=e^{h(a)}(f''(a)+h'(a)f'(a))> 0.
\end{align*}
Hence
\begin{align}\label{eq 2.21}
    (\phi g'-g\phi')r^{m-1}e^{-h}\Big|_{0}^{a}= (\phi(a) g'(a)-g(a)\phi'(a))a^{m-1}e^{-h(a)}< 0.
\end{align}
Substituting  \eqref{eq 2.21} into  \eqref{eq 2.20} gives
\begin{align*}
    0< \int_{0}^{a}(\mu_{1}-\tau_{2})g\phi r^{m-1}e^{-h(r)}dr
\end{align*}
Since $\phi(r)<0$ for $r\in (0,a)$, we conclude $ \mu_{1}< \tau_{2}$. This completes  the proof of  Proposition  \ref{prop 2.3}.
\end{proof}

\begin{proposition} \label{prop 2.4}
Suppose $h'(r)\ge 0$, and $\a\ge -\sigma_1(B(R); \gamma_h)$. Then
\begin{align*}
  \lambda_{2,\alpha}(B(R); \gamma_h)\geq 0.  
\end{align*}
\end{proposition}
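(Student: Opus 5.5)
We show $\lambda_{2,\alpha}(B(R);\gamma_h)\ge 0$ by testing the variational characterization \eqref{eq 1.3} against the Steklov quotient. Write $\lambda_2=\lambda_{2,\alpha}(B(R);\gamma_h)$ and $\sigma_1=\sigma_1(B(R);\gamma_h)$. Let $u$ be a second Robin eigenfunction, and argue by contradiction, assuming $\lambda_2<0$.

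Since $\alpha\le 0$ is the case of interest (for $\alpha\ge 0$ every Rayleigh quotient is nonnegative, so $\lambda_2\ge\lambda_1\ge 0$), we may use Proposition \ref{prop 2.3}. When $h'>0$ it lets us take $u=u_i=g(r)x_i/r$. Without that structural input we argue abstractly: the first eigenfunction $u_1$ of a ball is radial, so $\lambda_2$ is the bottom of the spectrum on the complement of the radial function $u_1$. Whether $\lambda_2=\mu_1$ or $\lambda_2=\tau_2$, we can choose the test functions appropriately.

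\textbf{Case $\lambda_2=\mu_1$.} The eigenfunctions $u_i=g(r)x_i/r$ have zero boundary mean $\int_{\partial B(R)}u_i e^{h}\,dA=0$ by oddness. So $u_i$ is admissible for $\sigma_1$, which gives
\[
\int_{B(R)}|\nabla u_i|^2\,d\gamma_h\ \ge\ \sigma_1\int_{\partial B(R)}u_i^2e^h\,dA\ \ge\ -\alpha\int_{\partial B(R)}u_i^2e^h\,dA,
\]
where the second step uses $\alpha\ge-\sigma_1$. Hence the numerator of the Robin quotient of $u_i$ is nonnegative, and $\lambda_2=\mu_1\ge 0$.

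\textbf{Case $\lambda_2=\tau_2$.} Here the eigenfunction $f(r)$ is radial and lies in the span orthogonal to $u_1$ in $L^2(d\gamma_h)$. It need not have zero boundary mean, so we instead use the inequality $\tau_2\ge\mu_1$. This follows from the argument of Proposition \ref{prop 2.3}, which uses only $h'\ge 0$ and, via Proposition \ref{prop 2.2}(1), monotonicity of $g$. That argument in fact gives $\mu_1<\tau_2$ under $h'\ge0$, with strictness appearing in the boundary term. Combined with the first case, this yields $\lambda_2=\min(\mu_1,\tau_2)=\mu_1\ge 0$.

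A more direct alternative: take the $2$-dimensional space spanned by $u_1$ and $f$. It contains a nonzero function $v$ with $\int_{\partial B(R)} v e^h\,dA=0$, whose Rayleigh quotient has nonnegative numerator by the same Steklov bound. Its quotient is at most $\tau_2$, since both $\lambda_1\le\tau_2$ and $u_1\perp f$ hold in the weighted $L^2$ and energy pairings (they are distinct eigenfunctions of the same symmetric form). Therefore $\tau_2\cdot\|v\|^2\ge Q(v)\ge 0$.

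The main obstacle is this second case: $\lambda_2$ might be attained by a radial eigenfunction not orthogonal to constants on the boundary. The min-max argument over $\mathrm{span}\{u_1,f\}$ handles it cleanly, and I would present that version. In fact it also covers the first case, since in general $\lambda_2$ is the max over a $2$-dimensional span of first and second eigenfunctions, and that span always contains a function with zero weighted boundary mean. This requires $\alpha\ge-\sigma_1$ and also $\alpha\le 0$ only for sign bookkeeping.
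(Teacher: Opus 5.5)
Your final argument, the span argument, is correct, and it takes a genuinely different route from the paper's.

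The paper does not use a min-max over two eigenfunctions. It relies on Proposition~\ref{prop 2.3} to know that the second eigenfunctions on the ball are exactly $u_i=g(r)x_i/r$, with the radial alternative $\tau_2$ ruled out beforehand by the ODE comparison $\mu_1<\tau_2$. It notes that these $u_i$ have zero weighted boundary mean by oddness and inserts them into the Steklov inequality, summed over $i$. It then integrates by parts using \eqref{eq 2.11} and $g'(R)+\alpha g(R)=0$, which identifies the resulting nonnegative quantity with $\lambda_{2,\alpha}(B(R);\gamma_h)\int_0^R g^2e^{h}r^{m-1}\,dr$. That is your ``Case $\lambda_2=\mu_1$'' carried out explicitly.

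Your span argument needs none of this. It takes a nonzero $v\in\operatorname{span}\{u_1,u_2\}$ with $\int_{\partial B(R)}ve^{h}\,dA=0$, which satisfies
$0\le\int|\nabla v|^2\,d\gamma_h+\alpha\int_{\partial B(R)}v^2e^{h}\,dA\le\lambda_{2,\alpha}\int v^2\,d\gamma_h$.
This uses no separation of variables and no condition on $h'$. It needs $\alpha\le0$ only for the trivial sign split, and it uses no property of the ball. So it in fact proves $\lambda_{2,\alpha}(\Omega;\gamma_h)\ge0$ for any Lipschitz $\Omega$ whenever $\alpha\ge-\sigma_1(\Omega;\gamma_h)$.

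Your route also avoids a small hypothesis mismatch in the paper. Proposition~\ref{prop 2.4} assumes only $h'\ge0$, yet its proof invokes Proposition~\ref{prop 2.3}, which is stated under $h'>0$ and $\alpha\le0$. The paper's route has the advantage of staying inside the explicit eigenfunction description $g(r)x_i/r$, which Section~\ref{sect3} needs anyway to compute $\lambda_{2,\alpha}(B;\gamma_h)$ via $F$. For the sign statement alone, your argument is shorter and more general.

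Since the span argument covers both of your cases, you can drop the case analysis, the unused contradiction setup, and the appeal to the proof of Proposition~\ref{prop 2.3}.
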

\begin{proof}
    Let $g(r)$ be as defined in Proposition \ref{prop 2.3}.  
    Since 
    \begin{align*}
        \int_{\partial B(R)}g(r)\frac{x_{i}}{r}e^{h(r)}dA=0, 1\leq i\leq m,
    \end{align*}
    the functions $u_{i}=g(r)\frac{x_{i}}{r}, 1\leq i\leq m,$ are admissible test functions  for $\sigma_{1}(B(R), \gamma_h)$. Therefore,
    \begin{align}\label{eq 2.22}
        \sum_{i=1}^{m}\int_{B(R)}|\nabla u_{i}|^{2}\, d\gamma_h\geq \sigma_1(B(R); \gamma_h) \sum_{i=1}^{m}\int_{\partial B(R)}|u_{i}|^{2}e^{h(r)}dA,
    \end{align}
    where  $dA$ is  the induced measure on $\p B$.
Using 
    \begin{align*}
        \sum_{i=1}^{m}|\nabla u_{i}|^{2}=g'(r)^{2}+\frac{m-1}{r^{2}}g(r)^{2}.
    \end{align*}
    and the fact $h(r)\le h(R)$ for $r\in[0, R]$, we estimate from \ref{eq 2.22} 
    \begin{align}\label{eq 2.23}
        0&\leq \sum_{i=1}^{m}\int_{B(R)}|\nabla u_{i}|^{2}e^{h(r)}dx- \sigma_1(B(R); \gamma_h)\sum_{i=1}^{m}\int_{\partial B(R)}|u_{i}|^{2}e^{h(r)}dA\nonumber\\
        &\leq \omega_{m-1}\int_{0}^{R}\left(g'(r)^{2}+\frac{m-1}{r^{2}}g^{2}\right)e^{h(r)}r^{m-1}dr+\alpha \omega_{m-1} g(R)^{2}e^{h(R)}R^{m-1},
    \end{align}
    where $\omega_{m-1}$  denotes the surface area of the unit sphere $\mathbb{S}^{m-1}$ in $\R^m$.
Using integration by parts and \eqref{eq 2.11}, we compute 
\begin{align*}
    &\int_{0}^{R}\left(g'(r)^{2}+\frac{m-1}{r^{2}}g^{2}\right)e^{h(r)}r^{m-1}dr\\
    =\quad &gg'e^{h(r)}r^{m-1}\Big|_{0}^{R}-\int_{0}^{R}g(g'e^{h(r)}r^{m-1})'dr+\int_{0}^{R}\frac{m-1}{r^{2}}g^{2}e^{h(r)}r^{m-1}dr\\
    =\quad &g(R)g'(R)e^{h(R)}R^{m-1}-\int_{0}^{R}g(g''+(\frac{m-1}{r}+h')g')e^{h(r)}r^{m-1}dr\\
    &+\int_{0}^{R}\frac{m-1}{r^{2}}g^{2}e^{h(r)}r^{m-1}dr\\
    =&g(R)g'(R)e^{h(R)}R^{m-1}+\int_{0}^{R}g^{2}(\lambda_{2,\alpha}(B(R);\gamma_h)-\frac{m-1}{r^{2}})e^{h(r)}r^{m-1}dr\\
    &+\int_{0}^{R}\frac{m-1}{r^{2}}g^{2}e^{h(r)}r^{m-1}dr.
\end{align*}
Substituting this equality into  inequality \eqref{eq 2.23} yields
  \begin{align*}
      0\leq& \lambda_{2,\alpha}(B(R);\gamma_h)\int_{0}^{R}g^{2}(r)e^{h(r)}r^{m-1}dr+g(R)e^{h(R)}R^{m-1}(g'(R)+\alpha g(R))\\
      =&\lambda_{2,\alpha}(B(R);\gamma_h)\int_{0}^{R}g^{2}(r)e^{h(r)}r^{m-1}\, dr,
  \end{align*}
which implies $\lambda_{2,\alpha}(B(R);\gamma_h)\geq 0$.
\end{proof}

\section{Proof of  Theorem \ref{thm 1}}\label{sect3}
In this section, we prove that round balls maximize the second Robin eigenvalue of \eqref{eq 1.1} among all symmetric bounded Lipschitz domains about the origin with the prescribed volume of $\gamma_h$. 

From now on, we assume $\Omega\subset \R^m$ is  a bounded Lipschitz domain symmetric about the origin, and $B\subset \R^m$ is the origin-centered round bound with the same $\gamma_h$ voulme as $\Omega$. Let $R$ be the radius of $B$. We extend the function $g$ defined in \eqref{eq 2.11} by
\begin{equation}\label{eq 3.1}
g(r)=g(R) e^{-\a(r-R)}\text{\quad for \quad} r\ge R.
\end{equation}
By definition, $g$ is continuously differentiable on $(0,\infty)$. If $\a\le 0$, then by Proposition \ref{prop 2.2}, $g$ is increasing on $(0,\infty)$. In  the sequel, $\sigma_1(B; \gamma_h)$ denotes the first nonzero Steklov eigenvalue of $B$ as defined in \eqref{eq 2.2}.

\begin{lemma}\label{lm 3.1}
Assume  $\a \in [-\sigma_1(B; \gamma_h),0]$. Define $F:[0,\infty)\to\mathbb{R}$ by
\begin{equation}\label{defH}
F(r):=g'(r)^2+\frac {m-1} {r^2}g^2(r) +2\a g(r)g'(r)+\a(\frac{m-1}{r}+h'(r))g^2(r)
\end{equation}
where $g(r)$ is defined by \eqref{eq 3.1}.
Then $F$ is monotonically decreasing on $(0,\infty)$.
\end{lemma}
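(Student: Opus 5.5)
The plan is to differentiate $F$ separately on $(0,R)$ and on $(R,\infty)$, show $F'\le 0$ on each piece, and then glue. Gluing is possible because $g$ is $C^1$ on $(0,\infty)$ by \eqref{eq 3.1}, and $h'$ is continuous, so $F$ is continuous at $r=R$. Throughout, write $k=m-1$ and $\l=\l_{2,\a}(B;\gamma_h)$.

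\textbf{Preliminary facts.} First, since $h'\ge 0$, the bound \eqref{eq 2.3} gives $\a\ge -\sigma_1(B;\gamma_h)\ge -\frac1R\ge -\frac2R$. Hence Proposition \ref{prop 2.2} applies: $g>0$, $g'>0$ and $g'+\a g\ge 0$ on $(0,R]$. Second, Proposition \ref{prop 2.4} gives $\l\ge 0$. Third, by Proposition \ref{prop 2.3}, $g$ solves \eqref{eq 2.11} on $(0,R)$.

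\textbf{On $(0,R)$.} Compute $F'$ and eliminate $g''$ using \eqref{eq 2.11}:
$$g''=-\Big(\frac kr+h'\Big)g'-\Big(\l-\frac k{r^2}\Big)g.$$
A direct computation should give
\begin{align*}
F'=&-2\l\, g(g'+\a g)-2h'g'^2+2\a g'^2+\a h''g^2+\frac{\a k}{r^2}g^2\\
&-\frac{2k}{r}\Big(g'-\frac gr\Big)^2 .
\end{align*}
The last term comes from regrouping $-\frac{2k}{r}g'^2+\frac{4k}{r^2}gg'-\frac{2k}{r^3}g^2$ into a perfect square. Each term is then nonpositive:
\begin{itemize}
\item the first by $\l\ge0$ and $g'+\a g\ge0$;
\item the second by $h'\ge0$;
\item the third, fourth and fifth by $\a\le0$ and $h''\ge0$;
\item the last because it is a negative multiple of a square.
\end{itemize}
So $F'\le0$ on $(0,R)$.

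\textbf{On $(R,\infty)$.} Here $g'=-\a g$ and $g^2=g(R)^2e^{-2\a(r-R)}$, so
$$F=g^2\Big(-\a^2+\frac k{r^2}+\frac{\a k}{r}+\a h'\Big).$$
Differentiating, with $(g^2)'=-2\a g^2$, should give
$$F'=g^2\Big(2\a^3-2\a^2h'+\a h''-\frac{k}{r^3}\big(2+3\a r+2\a^2r^2\big)\Big).$$
The terms $2\a^3$, $-2\a^2h'$ and $\a h''$ are nonpositive because $\a\le0$, $h'\ge0$ and $h''\ge0$. The quadratic $2t^2+3t+2$ has discriminant $9-16<0$, so it is positive for every real $t$, in particular for $t=\a r$. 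Hence $F'\le0$ on $(R,\infty)$ as well.

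\textbf{Expected obstacle.} The only delicate step is the bookkeeping on $(0,R)$. One must spot the completed square $-\frac{2k}{r}(g'-g/r)^2$, and must recognize that the $\l$-term pairs exactly into $g(g'+\a g)$. That pairing is where the two key inputs enter: the sign $\l\ge0$ from Proposition \ref{prop 2.4}, and the monotonicity $g'\ge-\a g$ from Proposition \ref{prop 2.2}(2). The latter is justified via \eqref{eq 2.3}.
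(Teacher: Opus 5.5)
Your proof is correct and follows the paper's argument essentially line by line. On $(0,R)$ you use the same identity for $F'$, with the term $-2\lambda g(g'+\alpha g)$ controlled by Proposition \ref{prop 2.4} and by Proposition \ref{prop 2.2}(2) via \eqref{eq 2.3}; on $(R,\infty)$ you use the same explicit formula, and your discriminant check of $2t^2+3t+2$ is equivalent to the paper's completed square $2(\alpha+\frac{3}{4r})^2+\frac{7}{8r^2}$. Your remark that $F$ is continuous at $r=R$, so the two pieces glue, makes explicit a point the paper leaves implicit.
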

\begin{proof}
{\bf Case 1.} $0<r\le R$. Direct calculation gives
\begin{align*}
    F'(r)&=2g'g''+2\frac{m-1}{r^{2}}gg'-2\frac{m-1}{r^{3}}g^{2}\\
    &+2\alpha g'(r)^{2}+2\alpha g g''+2\alpha (\frac{m-1}{r}+h')gg'+\alpha(h''-\frac{m-1}{r^{2}})g^{2}
\end{align*}
Using  \eqref{eq 2.11}, we obtain
\begin{align*}
    F'(r)&=2g'(g''+\frac{m-1}{r}g)-2\frac{m-1}{r^{3}}g^{2}\\
    &+2\alpha g(g''+(\frac{m-1}{r}+h')g')+2\alpha g'(r)^{2}+\alpha (h''-\frac{m-1}{r^{2}})g^{2}\\
    &=2g'(-(\frac{m-1}{r}+h')g'-(\lambda_2 -\frac{m-1}{r^{2}})g+\frac{m-1}{r^{2}}g)-2\frac{m-1}{r^{3}}g^{2}\\
    &+2\alpha g^{2}(\frac{m-1}{r^{2}}-\lambda_2)+\alpha (h''-\frac{m-1}{r^{2}})g^{2}+2\alpha (g')^{2}\\
    &=-2\frac{m-1}{r}(g')^{2}+4\frac{m-1}{r^{2}}g'g-2\frac{m-1}{r^{3}}g^{2}-2h'(r)g'(r)^{2}-2\lambda_2 gg'\\
    &+\alpha \frac{m-1}{r^{2}}g^{2}+2\alpha g'(r)^{2}+\alpha h''g^{2}-2\alpha \lambda_2 g^{2}\\
    &=-2\frac{m-1}{r}(g'-\frac{1}{r}g)^{2}-2\lambda_2 g(g'+\alpha g)-2h'g'(r)^{2}+\alpha \frac{m-1}{r^{2}}g^{2}+2\alpha (g')^{2}+\alpha h''g^{2},
\end{align*}
where we denote $\l_2=\l_{2, \a}(B; \gamma_h)$  for brevity. 
Using  $h'\ge 0$, $h''\ge 0$ and $\a\le 0$, we obtain 
\begin{align}\label{eq 3.3}
   F'(r)\leq -2\frac{m-1}{r}(g'-\frac{1}{r}g)^{2}-2\lambda g(g'+\alpha g).
\end{align}
By the assumption $\a\ge -\sigma_1(B; \gamma_h)$ and \eqref{eq 2.3}, we have
$$
\a\ge -\frac 1 R,
$$
Part (2) of Proposition \ref{prop 2.3} gives 
\begin{align}\label{eq 3.4}
    g'(r)+\a g(r)\ge 0 , \quad r\in [0, R].
\end{align}
Recall from Proposition \ref{prop 2.4} that $\l_2\ge 0$, then  from \eqref{eq 3.3} and \eqref{eq 3.4}, we conclude
\begin{align*}
    F'(r)\leq -2\frac{m-1}{r}(g'-\frac{1}{r}g)^{2},
\end{align*}
which proves that $F(r)$ is monotonically decreasing on $(0, R)$.

{\bf Case 2.} $r\ge R$. For $r\ge R$, 
\begin{align*}
    F(r)=(-\alpha^{2}+\frac{m-1}{r^{2}}+\alpha\frac{m-1}{r}+\alpha h^{'})g^{2}(r),
\end{align*}
and a direct computation yields
\begin{align*}
    F'(r)=(2\alpha^{3}+\alpha h''-2\alpha^{2}h'-2\alpha^{2}\frac{m-1}{r}-3\alpha \frac{m-1}{r^{2}}-2\frac{m-1}{r^{3}})g^{2}(r).
\end{align*}
Since   $\alpha\leq 0$, $h'\geq 0$ and $h^{''}\geq 0$, we obtain
\begin{align*}
    F'(r)\le&    \left(-2\alpha^{2}\frac{m-1}{r}-3\alpha \frac{m-1}{r^{2}}-2\frac{m-1}{r^{3}}\right) g(r)^2\\
    =&-\frac{m-1}{r}(2(\alpha+\frac{3}{4r})^{2}+\frac{7}{8}\frac{1}{r^{2}})g(r)^2<0,
\end{align*}
proving that $F(r)$ is monotonically decreasing on $(0, R)$.
\end{proof}
 The main idea in proving Theorem \ref{thm 1}  is to construct trial functions for $\l_{2, \a}$ (see \eqref{eq 1.3}) using techniques introduced by Weinberger. We first recall a monotonicity lemma for weighted symmetrization.
\begin{lemma}\label{Lemma 3.2}
    Let $\Omega\subset \R^m$ be a bounded set, and let $B$ be the origin-centered round ball with the same $\gamma_h$ volume as $\Omega$. If $f(r)$ is monotonically decreasing  on  $[0,+\infty)$, then 
    \begin{align*}
        \int_\Omega f(|x|)\, d\gamma_m\le    \int_B f(|x|)\, d\gamma_m.
    \end{align*}
    Equality holds if and only if $\Omega=B$.
\end{lemma}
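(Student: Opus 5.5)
The plan is the standard ``mass transplantation'' argument. Here $d\gamma_m$ means $d\gamma_h=e^{h(|x|)}dx$. Split $\Omega$ into $\Omega\cap B$ and $\Omega\setminus B$, and split $B$ into $B\cap\Omega$ and $B\setminus\Omega$. Since $\gamma_h(\Omega)=\gamma_h(B)$, subtracting the common part $\Omega\cap B$ gives
\begin{align*}
\gamma_h(\Omega\setminus B)=\gamma_h(B\setminus\Omega).
\end{align*}
Then
\begin{align*}
\int_B f\,d\gamma_h-\int_\Omega f\,d\gamma_h=\int_{B\setminus\Omega} f(|x|)\,d\gamma_h-\int_{\Omega\setminus B} f(|x|)\,d\gamma_h .
\end{align*}
Let $R$ be the radius of $B$. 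On $B\setminus\Omega$ we have $|x|<R$, so $f(|x|)\ge f(R)$ by monotonicity. On $\Omega\setminus B$ we have $|x|\ge R$, so $f(|x|)\le f(R)$. Hence the difference is at least $f(R)\bigl(\gamma_h(B\setminus\Omega)-\gamma_h(\Omega\setminus B)\bigr)=0$, which gives the inequality. No integrability issue arises: $\Omega$ is bounded, and $f$ is monotone, hence bounded on compacts. Only local boundedness of $f$ near $0$ needs care, and for $f$ defined on $[0,\infty)$ it is automatic.

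For the equality case, note that equality forces both inequalities above to be equalities:
\begin{align*}
\int_{B\setminus\Omega}(f(|x|)-f(R))\,d\gamma_h=0,\qquad \int_{\Omega\setminus B}(f(R)-f(|x|))\,d\gamma_h=0 .
\end{align*}
To conclude $\Omega=B$ (up to null sets), I would use strict monotonicity of $f$. In the application, $F$ from Lemma \ref{lm 3.1} is strictly decreasing: its derivative is strictly negative for $r>R$, and it is nonincreasing on $(0,R)$. If $f$ is strictly decreasing, the integrands are strictly positive a.e. on the respective sets. Since $e^{h}>0$, both sets must have Lebesgue measure zero. For open Lipschitz domains, $|\Omega\,\triangle\, B|=0$ implies $\Omega=B$, because both sets equal the interior of their closure.

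The main obstacle is that the statement as written only assumes ``monotonically decreasing'', under which the equality clause can fail, e.g. for constant $f$. I would therefore either read ``decreasing'' as strict, or prove the rigidity statement under the extra hypothesis actually available for $F$. Strictness on $(R,\infty)$ is enough to kill $\Omega\setminus B$, since there $f(|x|)<f(R)$ for $|x|>R$. Once $\gamma_h(\Omega\setminus B)=0$, the mass balance gives $\gamma_h(B\setminus\Omega)=0$ as well, without any strictness inside $B$. This refinement, needing strictness only outside $R$, is the version I would actually prove and use.
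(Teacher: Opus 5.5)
Your proof is correct, but it takes a more elementary route than the paper. The paper gives no self-contained argument; it appeals to the (weighted) Hardy--Littlewood rearrangement inequality and refers to the proof of inequality (4.24) in Brock--Chiacchio--di Blasio. In that approach, $\chi_\Omega$ has $\gamma_h$-rearrangement $\chi_B$, while $f(|x|)$ is its own $\gamma_h$-rearrangement because it is radially nonincreasing and the weight is radial. Hence $\int\chi_\Omega f\,d\gamma_h\le\int\chi_B f\,d\gamma_h$, after adding a constant to make $f$ nonnegative, which is harmless because $\gamma_h(\Omega)=\gamma_h(B)$.

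Your comparison of $f$ with the single level $f(R)$ on $B\setminus\Omega$ and on $\Omega\setminus B$ is the elementary special case of that inequality. It needs no rearrangement theory and no sign normalization. It also splits the defect into two nonnegative integrals, which gives a clear handle on equality that the citation does not.

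Your criticism of the equality clause is justified. As stated, it fails for merely nonincreasing $f$ (e.g.\ constant $f$), and for an arbitrary bounded set one only gets $\Omega=B$ up to a null set. Your refinement supplies what the rigidity part of Theorem~\ref{thm 1} actually needs:
\begin{itemize}
\item strict decrease on $(R,\infty)$ alone forces $\gamma_h(\Omega\setminus B)=0$;
\item the mass balance then gives $\gamma_h(B\setminus\Omega)=0$;
\item regular openness of Lipschitz domains upgrades $|\Omega\,\triangle\,B|=0$ to $\Omega=B$.
\end{itemize}
This matches what the paper provides: Case 2 of Lemma~\ref{lm 3.1} gives $F'<0$ on $(R,\infty)$, whereas Case 1 only gives $F'\le 0$ on $(0,R)$.
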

\begin{proof}
  Lemma 3.2 follows directly from Hardy-Littlewood inequality;  see the proof of  inequality (4.24) in \cite[Page 213]{BCB16}.
\end{proof}

Now we now proceed to prove Theorem \ref{thm 1}.
\begin{proof}
 Let $\Omega$ be a bounded Lipschitz domain symmetric about the origin in $\R^m$, and let $B\subset \R^m$ be the origin-centered round ball of radius $R$ satisfying 
 $$
\int_B \, d\gamma_h =\int_\Omega \, d\gamma_h.
 $$
Define $g(r)$ for $r\in [0, +\infty)$ as in \eqref{eq 3.1}, and for each $1\le i\le m$ set
\begin{align*}
    v_i(x)=g(|x|)\frac{x_i}{|x|}, 
\end{align*}
where $x=(x_1, x_2, \cdots, x_m)\in \R^m$.

Since  both $\Omega$ and $d\gamma_h$ are symmetric about the origin, and $\l_{1, \a}(\Omega;\gamma_h)$ is simple,  the first Robin eigenfunction, denoted by $u_1(x)$, is also symmetric about the origin. Consequently,
     \begin{align}\label{eq 3.5}
        \int_\Omega v_i(x)u(x)   \, d\gamma_h=0, \quad i=1,2,\cdots, m.
    \end{align}
Thus, by \eqref{eq 3.5} and variational characterization \eqref{eq 1.3} of $\l_{2, \a}(\Omega; \gamma_h)$, each $v_i(x)$ is admissible;, i.e. for $1\le i\le m$,
    \begin{align}\label{eq 3.6}
      \l_{2, \a}(\Omega; \gamma_h)  \int_{\Omega} v_i(x)^2 \, d\gamma_h \leq \int_{\Omega} |\nabla v_i(x) |^2 \, d\gamma_h+\a\int_{\p \Omega} v_i(x)^2e^{h(|x|)}\, dA
    \end{align}
A direct computation shows
\begin{equation*} 
        |\nabla v_i(x)|^2 = g'(r)^2 \frac{x_i^2}{|x|^2} + g(r)^2 (\frac 1 {|x|^2}-\frac{x_i^2}{|x|^4}).
\end{equation*}  
Substituting this into  \eqref{eq 3.6} and summing over $i$ yields
    \begin{align}\label{eq 3.7}
          \l_{2, \a}(\Omega; \gamma_h)  \int_{\Omega} g(r)^2\, d\gamma_h
\leq \int_{\Omega}  \left(g'(r)^2+\frac{m-1}{r^2}g(r)^2\right) \, d\gamma_h+\a\int_{\p \Omega} g(x)^2e^{h(|x|)}\, dA.
    \end{align}
Using the divergence theorem, we estimate 
\begin{align}\label{eq 3.8}
    \int_{\partial \Omega}g(r)^2 e^{h(|x|)}\, dA&\geq \int_{\partial \Omega}g(r)^2\langle \nabla r, \nu \rangle e^{h(|x|)}\, dA=\int_{\Omega}\div (g^{2}e^{h(|x|)}\nabla r)dx\nonumber\\
    &=\int_{\Omega}\left(g^{2}\Delta r+\nabla g^{2}\cdot\nabla r+g^{2}\nabla h \cdot \nabla r\right)\, d\gamma_h\nonumber\\
    &=\int_{\Omega}\left((\frac{m-1}{r}+h'(r))g(r)^{2}+(g(r)^{2})'\right)\, d\gamma_h.
\end{align}    
Since $\a\le 0$, substituting  \eqref{eq 3.8} to \eqref{eq 3.7} gives
\begin{align}\label{eq 3.9}
\begin{split}
  &\l_{2, \a}(\Omega; \gamma_h)\int_{\Omega} g(r)^2\, d\gamma_h\\
  \le& \int_{\Omega} \left(g'(r)^2+\frac{m-1}{r^2} g(r)^2)+\a (\frac {m-1} r+h'(r))g(r)^2+2\a g(r)g'(r)\right) \, d\gamma_h\\
  =&\int_{\Omega} F(r)^2 \, d\gamma_h,
  \end{split}
\end{align}
where $F(x)$ is defined in Lemma \ref{lm 3.1}.
From Proposition \ref{prop 2.2}, $g(r)$ is monotonically increasing on $(0, \infty)$; hence, by Lemma \ref{Lemma 3.2}, 
\begin{align}\label{eq 3.10}
    \int_{\Omega}g^{2}\, d\gamma_h \geq \int_{B}g^{2}(r)\, d\gamma_h.
\end{align}
Moreover, by Lemma \ref{lm 3.1} and Lemma \ref{Lemma 3.2}, we also have  
\begin{align}\label{eq 3.11}
\int_{\Omega}F(r) \, d\gamma_h\leq \int_{B}F(r)d\gamma_h.
\end{align}
Assembling inequalities \eqref{eq 3.9}, \eqref{eq 3.10} and \eqref{eq 3.11}, we get
\begin{align}\label{eq 3.12}
      \l_{2, \a}(\Omega; \gamma_h)\le \frac{\int_{B}F(r)d\gamma_h}{\int_{B}g(r)^{2}d\gamma_h}.
\end{align}
Recall that the functions $g(r)\frac {x_i}{|x|}$ are the eigenfunctions corresponding to $\l_{2,\a}(B;\gamma_h)$. Hence
\begin{align*}
\l_{2,\a}(B;\gamma_h)=\frac{\int_{B} g'(r)^2+\frac{m-1}{r^2}g(r)^2\, d\gamma_h + \a\int_{\p B} g(r)^2 e^{h(|x|)}\, dA}{\int_{ B} g(r)^2\, d\gamma_h}.
\end{align*}
Furthermore,
\begin{align*}
\int_{\partial B} g^2(r)e^{h(|x|)}\, dA&=\int_{\partial B_R} \langle g(r)^2e^{h(|x|)}\nabla r, \nu\rangle\, dA\\
&=\int_{B} \operatorname{div}\left( g(r)^2 e^{h(|x|)} \nabla r\right)\, dx\\
&=\int_{B}2g(r)g'(r)+(\frac {m-1}{r}+h'(r))g(r)^2  \, d\gamma_h.
\end{align*}
Therefore,
\begin{align}\label{eq 3.13}
\l_{2,\a}(B;\gamma_h) =\frac{\int_{B} F(r)\, d\gamma_h}{\int_{B} g(r)^2\, d\gamma_h} .
\end{align}
From \eqref{eq 3.12} and \eqref{eq 3.13}, we deduce
\begin{align*}
       \l_{2, \a}(\Omega; \gamma_h)\le      \l_{2, \a}(B; \gamma_h),
\end{align*}
which establishes inequality \eqref{eq 1.6}.

If equality holds inequality \eqref{eq 1.6},
    then equality in  \eqref{eq 3.10} and \eqref{eq 3.11} forces $\Omega=B$.
     This completes the proof.
\end{proof}
Now we prove Corollary \ref{cor}.
\begin{proof}
    Since $\lambda_{2,0}(\Omega; \gamma_h)$ is the second (first nonzero) eigenvalue and
  and $\lambda_{2,-\sigma_{1}(B;\gamma_h)}(\Omega)\leq \lambda_{2,-\sigma_{1}(B;\gamma_h)}(B)=0$, there exists $\alpha_{0}\in [-\sigma_{1}(B;\gamma_h),0]$ such that  $\lambda_{2,\alpha_{0}}(\Omega;\gamma_h)=0$.
Let $u$ be an eigenfunction for $\lambda_{2,\alpha_{0}}(\Omega;\gamma_h)$. Then 
\begin{align}\label{eq 3.14}
    \begin{cases}
\Delta u+\nabla h\cdot \nabla u = 0 & \text{in } \Omega, \\
\frac{\partial u}{\partial \nu} = -\alpha_0 u & \text{on } \partial \Omega.
\end{cases}
\end{align}
Multiplying the first equation in \eqref{eq 3.14} by $e^{h}$ and integrating over $\Omega$ yields
\begin{align*}
    0=\int_{\Omega}(\Delta u+\nabla h\cdot \nabla u)\, d\gamma_h=\int_{\partial \Omega}\frac{\partial u}{\partial \nu}e^{h}dA=-\alpha_{0}\int_{\partial \Omega}u e^{h}dA.
\end{align*}
Since $\alpha_{0}\neq 0$, we obtain $\int_{\partial \Omega}u e^{h}dA$=0.
Recalling the definition
\begin{align*}
    \sigma_{1}(\Omega; \gamma_h):=\inf \{ \frac{\int_{\Omega}|\nabla \varphi|^{2}\, d\gamma_h}{\int_{\partial\Omega}\varphi^{2}e^{h}dA}: \varphi\in W^{1,2}(\Omega; \gamma_h)\setminus\{0\},\int_{\partial \Omega}\varphi e^{h}dA=0\},
\end{align*}
and noting that $u$ satisfies the orthogonality condition, we have
\begin{align*}
    \sigma_{1}(\Omega;\gamma_h)&\leq \frac{\int_{\Omega}|\nabla u|^{2}\, d\gamma_h}{\int_{\partial\Omega}u^{2}e^{h}dA}\\
    =&\frac{\int_{\partial\Omega}u\frac{\partial u}{\partial \nu} e^{h}dA-\int_{\Omega}u(\Delta u+\nabla h\cdot \nabla u)\, d\gamma_h}{\int_{\partial\Omega}u^{2}e^{h}dA}\\
    &=\frac{-\alpha_{0}\int_{\partial \Omega}u^{2}e^{h}dA}{\int_{\partial\Omega}u^{2}e^{h}dA}\\
    =&-\alpha_{0}
    \leq \sigma_{1}(B;\gamma_h).
\end{align*}
Thus, the desired inequality follows. 
\end{proof}

 \bibliographystyle{plain}
	\bibliography{ref}

\end{document}